\newtheorem{theorem}{Theorem}
\newtheorem{corollary}[theorem]{Corollary}
\newtheorem{lemma}[theorem]{Lemma}
\newenvironment{proof}[1][Proof]{\noindent\textbf{#1.} }{\ \rule{0.5em}{0.5em}}
\begin{document}

\title{Concentration of random polytopes around the expected convex hull}
\author{Daniel J. Fresen\thanks{Yale University, Department of Mathematics, daniel.fresen@yale.edu} \hspace{0in} and  Richard A. Vitale \thanks{The University of Connecticut, Department of Statistics, r.vitale@uconn.edu}}
\maketitle

\begin{abstract}
We provide a streamlined proof and improved estimates for the weak
multivariate Gnedenko law of large numbers on concentration of random
polytopes within the space of convex bodies (in a fixed or a high
dimensional setting), as well as a corresponding strong law of large numbers.
\end{abstract}

\section{Introduction}

Let $d\in \mathbb{N}$ and let $\mu $ be a probability measure on $\mathbb{R}%
^{d}$ with a log-concave density $f=d\mu /dx$, i.e. $-\log f$ is a convex
extended real valued function. Let $n\geq d+1$ and let $(X_{i})_{1}^{n}$
denote an i.i.d. sequence of random vectors with common distribution $\mu $.
The convex hull%
\begin{equation}
P_{n}=\mathrm{conv}\{X_{i}\}_{1}^{n}  \label{random poly def}
\end{equation}%
is a random polytope and, as such, is a random element w.p.1 of the space $%
\mathcal{K}_{d}$ of all convex bodies in $\mathbb{R}^{d}$ (compact convex
sets with non-empty interior). There are various metrics and metric-like
functions on $\mathcal{K}_{d}$, such as the Hausdorff distance $d_{\mathcal{H%
}}$ and the Banach-Mazur distance $\delta ^{BM}$ (for origin symmetric
bodies). We refer the reader to \cite{Schn} for general background on convex
bodies, and to \cite{Gruber} specifically for metric, and other, structures
on $\mathcal{K}^{d}$.

It was shown in \cite{Fr2013} that if $n\geq c\exp (\exp (5d))$, then with
probability at least $1-3^{d+3}(\log n)^{-1000}$, there exists $x\in \mathbb{%
R}^{n}$ and%
\[
\lambda \leq 1+c^{\prime }d^{2}\frac{\log \log n}{\log n}
\]%
such that%
\begin{equation}
\lambda ^{-1}(F_{1/n}-x)+x\subseteq P_{n}\subseteq \lambda (F_{1/n}-x)+x
\label{sandwhiching}
\end{equation}%
where $c,c^{\prime }>0$ are universal constants and $F_{1/n}$ is the
floating body defined by%
\begin{equation}
F_{\delta }=\cap \{\mathfrak{H}:\mu (\mathfrak{H})\geq 1-\delta \}
\label{floatin}
\end{equation}%
where the intersection runs through the collection of all closed half-spaces 
$\mathfrak{H}$ of $\mu $-mass at least $1-\delta $ ($\delta <e^{-1}$). The
body $F_{1/n}$ was originally defined by Sch\"{u}tt and Werner \cite{ScWe}
in the case of Lebesgue measure on a convex body and has often been used to
model random polytopes, see for example \cite{Bar07, BaLa, Vu}.

Being log-concave, the density $f$ decays at least as quickly as an
exponential function. Any bound on the decay rate of $f$ translates to a
bound on the Hausdorff distance $d_{\mathcal{H}}(P_{n},F_{1/n})$. For
example if the tails of $\mu $ are sub-Gaussian (with universally bounded
constants), then $\mathrm{diam}(F_{1/n})\leq c(\log n)^{1/2}$ and (\ref%
{sandwhiching}) translates to%
\[
d_{\mathcal{H}}(P_{n},F_{1/n})\leq c^{\prime }d^{2}\frac{\log \log n}{\sqrt{%
\log n}} 
\]%
where $c,c^{\prime }>0$ are universal constants. This is an embodiment of
the concentration of measure phenomenon: the polytope $P_{n}$, as a random
element of the metric space $(\mathcal{K}_{d},d_{\mathcal{H}})$, is
concentrated around $F_{1/n}$.

In the case $d=1$, $P_{n}$ reduces to the interval%
\[
\lbrack \min \{X_{i}\}_{1}^{n},\max \{X_{i}\}_{1}^{n}] 
\]%
and we see that the above mentioned result generalizes a theorem of Gnedenko 
\cite{Gn} on concentration of the maximum and minimum of a large i.i.d.
sample (under rapid decay of the tails of $\mu $). Other multivariate
analogs of Gnedenko's law of large numbers are included in \cite{Geff} for
the multivariate normal distribution, \cite{Good} for Gaussian measures on
infinite dimensional spaces, \cite{DMR, Fish 1966, Fish 1969} for regularly
varying distributions, and \cite{KiRe, MR94} for more general distributions.

The proof of (\ref{sandwhiching}) was complicated by the fact that there is
no convenient expression for the support function of the floating body,%
\[
h_{F_{1/n}}(\theta )=\max_{x\in F_{1/n}}\left\langle \theta ,x\right\rangle 
\]%
In this paper we study concentration of $P_{n}$ around the expected convex
hull%
\begin{equation}
\mathbb{E}P_{n}=\{x\in \mathbb{R}^{n}:\forall \theta \in
S^{d-1},\left\langle \theta ,x\right\rangle \leq \mathbb{E}\max_{1\leq i\leq
n}\left\langle \theta ,X_{i}\right\rangle \}  \label{def expected hull}
\end{equation}%
which is easily seen to be a convex body with support function%
\begin{equation}
h_{\mathbb{E}P_{n}}(\theta )=\mathbb{E}\max_{1\leq i\leq n}\left\langle
\theta ,X_{i}\right\rangle   \label{support expected}
\end{equation}%
Using the expected convex hull leads to a streamlined proof of (\ref%
{sandwhiching}). The notion of the expectation of a random convex body
follows the theory of integrals of set valued functions, see for example 
\cite{Art, Au, Deb, Kudo} and the references therein. It was used in \cite%
{ArtVit} for the purpose of a Kolmogorov strong law of large numbers and has
appeared as an approximant to floating bodies in bounded domains \cite{BaVi}%
, as well as in other contexts e.g. \cite{GW92, GW12, Vitale87, Vitale 90,
Vitale 91, Vitale 94, Weil95}.

In the original paper \cite{Fr2013} we were mainly interested in a
quantitative dependence on $n$. Although our bounds included dependence on
dimension, the required sample size was very large. Theorem \ref{polytope
and expected val} includes improved bounds on the required sample size and
is more in the spirit of the high dimensional theory. The quantitative
dependence that we achieve is essentially the same as that in Dvoretzky's
theorem, see for example \cite{Sch}. This result should also be compared to
the main result in \cite{DGT}.

To make the present exposition brief, we refer the reader to \cite{Fr2013}
for a more detailed discussion.

\section{Main results}

\begin{theorem}
\label{polytope and expected val}Let $d\in \mathbb{N}$ and let $\mu $ be a
log-concave probability measure on $\mathbb{R}^{d}$ with center of mass at
the origin and non-singular covariance matrix. Consider any $\varepsilon \in
(0,1/2)$ and let $n\geq \exp (7d\varepsilon ^{-1}\log \varepsilon ^{-1})$.
Let $(X_{i})_{1}^{n}$ be an i.i.d. sample from $\mu $, $P_{n}=\mathrm{conv}%
\{X_{i}\}_{1}^{n}$, and let $\mathbb{E}P_{n}$ denote the expected convex
hull as defined by (\ref{support expected}). With probability at least
\thinspace $1-3n^{-\varepsilon /4}$,%
\[
(1-\varepsilon )\mathbb{E}P_{n}\subseteq P_{n}\subseteq (1+\varepsilon )%
\mathbb{E}P_{n} 
\]
\end{theorem}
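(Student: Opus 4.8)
The plan is to pass to support functions and prove a uniform (over $\theta \in S^{d-1}$) two-sided estimate comparing $h_{P_n}(\theta) = \max_{i} \langle \theta, X_i\rangle$ to $h_{\mathbb{E}P_n}(\theta) = \mathbb{E}\max_i \langle \theta, X_i\rangle$. The inclusion $(1-\varepsilon)\mathbb{E}P_n \subseteq P_n \subseteq (1+\varepsilon)\mathbb{E}P_n$ is equivalent to
\[
(1-\varepsilon)\, h_{\mathbb{E}P_n}(\theta) \le h_{P_n}(\theta) \le (1+\varepsilon)\, h_{\mathbb{E}P_n}(\theta) \qquad \text{for all } \theta \in S^{d-1},
\]
so it suffices to control $\sup_{\theta}|h_{P_n}(\theta) - h_{\mathbb{E}P_n}(\theta)| / h_{\mathbb{E}P_n}(\theta)$. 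First I would record, using log-concavity and the normalization (center of mass at the origin, non-singular covariance), the basic a priori bounds on the scalar variable $\langle\theta, X_1\rangle$: it is a one-dimensional log-concave random variable with mean $0$ and variance bounded below, hence $h_{\mathbb{E}P_n}(\theta) = \mathbb{E}\max_i\langle\theta,X_i\rangle$ is of order $\log n$ up to universal constants, with matching upper and lower bounds uniform in $\theta$. This is what makes the relative error meaningful and is where the hypothesis that $n$ is at least exponential in $d$ enters.

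The core of the argument is a pointwise concentration estimate for fixed $\theta$ followed by a union bound over a net. For fixed $\theta$, write $Y_i = \langle\theta, X_i\rangle$; then $M_n := \max_i Y_i$ is a maximum of i.i.d. one-dimensional log-concave variables. I would show $M_n$ concentrates around its mean at scale $O(1)$ (not $O(\log n)$): the upper tail $\mathbb{P}(M_n \ge \mathbb{E}M_n + t)$ decays because each $Y_i$ has an exponential-type upper tail, and the lower tail $\mathbb{P}(M_n \le \mathbb{E}M_n - t)$ decays because $M_n \le \mathbb{E}M_n - t$ forces all $n$ samples to miss a set of probability $\asymp e^{t}/n$, giving a bound like $(1 - c e^{t}/n)^n \le \exp(-c e^{t})$. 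Combining, $\mathbb{P}(|M_n - \mathbb{E}M_n| \ge t) \le C\exp(-ct)$ for $t$ in the relevant range. Since $\mathbb{E}M_n \asymp \log n$, taking $t = c'\varepsilon \log n$ gives a relative error of size $\varepsilon$ with failure probability $\le n^{-c''\varepsilon}$ for each fixed direction. Then I would take a $\rho$-net $\mathcal{N}$ on $S^{d-1}$ with $\rho$ a small polynomial in $\varepsilon$ and $|\mathcal{N}| \le (3/\rho)^d$; both $h_{P_n}$ and $h_{\mathbb{E}P_n}$ are Lipschitz in $\theta$ with constant $\asymp \max_i|X_i| \asymp \sqrt{d}\log n$ (on a high-probability event controlling $\max_i |X_i|$), so discretization changes $h$ by at most $\rho\sqrt d\log n$, which is absorbed into the budget once $\rho$ is chosen appropriately. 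The union bound costs a factor $(3/\rho)^d = \exp(O(d\log\varepsilon^{-1}))$, and this is exactly balanced against $n^{-c\varepsilon} = \exp(-c\varepsilon\log n)$ by the hypothesis $n \ge \exp(7d\varepsilon^{-1}\log\varepsilon^{-1})$, leaving a net failure probability $\le 3n^{-\varepsilon/4}$.

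The main obstacle I anticipate is getting the constants in the one-dimensional maximal concentration inequality sharp enough, uniformly in $\theta$, that the final probability genuinely comes out as $3n^{-\varepsilon/4}$ with the stated sample-size threshold — in particular, handling the lower-tail estimate for $M_n$ (controlling $\mathbb{P}(Y_1 \ge \mathbb{E}M_n - t)$ from below via log-concavity, without losing dimension-dependent factors) and making sure the Lipschitz/net term does not force a larger threshold on $n$. A secondary technical point is that $h_{\mathbb{E}P_n}(\theta)$ must be bounded away from $0$ uniformly; this uses the non-singular covariance hypothesis (after an affine normalization one may assume the covariance is the identity, so $\mathrm{Var}\langle\theta,X_1\rangle = 1$ for all $\theta$), and one should check this normalization is harmless since both the random polytope and the expected hull transform equivariantly under linear maps. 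Everything else — the exponential tail bounds for log-concave variables, the estimate $\mathbb{E}M_n \asymp \log n$, the covering numbers of the sphere — is standard and I would cite or quote it rather than reprove it.
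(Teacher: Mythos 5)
Your outline has the right architecture (pointwise one-dimensional concentration for $\max_i\langle\theta,X_i\rangle$ plus a union bound over a net), but the calibration step it rests on is wrong. You take additive concentration of $M_n=\max_i\langle\theta,X_i\rangle$ at scale $O(1)$ and convert it to relative error by claiming $h_{\mathbb{E}P_n}(\theta)=\mathbb{E}M_n\asymp\log n$ uniformly in $\theta$ after isotropic normalization. Only the upper bound $\mathbb{E}M_n\leq C\log n$ is universal for log-concave marginals: for the standard Gaussian $\mathbb{E}M_n\asymp\sqrt{\log n}$, and for the uniform measure on a convex body (a central example here) $\mathbb{E}M_n$ is bounded by the support function of the body and does not grow with $n$ at all. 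So either $t=c'\varepsilon\log n$ exceeds $\varepsilon\,\mathbb{E}M_n$ and you have not proved the required relative bound, or you must take $t\asymp\varepsilon\,\mathbb{E}M_n=O(\varepsilon)$ and your additive inequality $\mathbb{P}(|M_n-\mathbb{E}M_n|\geq t)\leq Ce^{-ct}$ yields failure probability $Ce^{-c\varepsilon}$, which is useless. The statement that is actually true uniformly over the log-concave class, and which the paper isolates as Lemma \ref{2 sided 1D concentration}, is multiplicative and scale-free: $\mathbb{P}\{Y_{(n)}\leq(1+t)\mathbb{E}Y_{(n)}\}\geq 1-n^{-t/2}$ and $\mathbb{P}\{Y_{(n)}\geq(1-t)\mathbb{E}Y_{(n)}\}\geq 1-\exp(-n^{t/2}/3)$; it is proved by comparing the $1-1/n$ quantile with nearby quantiles using convexity of $-\log(1-J)$ and sandwiching $\mathbb{E}Y_{(n)}$ between quantiles of $Y_{(n)}$, with no variance normalization and no estimate of $\mathbb{E}Y_{(n)}$ itself. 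Your additive-scale-$O(1)$ picture is the special case of exponential-type tails and cannot be made uniform.

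The discretization step has a second, quantitative problem. With a Euclidean net on $S^{d-1}$ and Lipschitz constant of order $\max_i|X_i|$, the discretization error is additive, and since the correct uniform lower bound on $h_{\mathbb{E}P_n}$ in the isotropic case is only a constant, you are forced to take $\rho$ of order $\varepsilon/(\sqrt d\,\log n)$; the union bound then costs $\exp\left(Cd(\log\varepsilon^{-1}+\log d+\log\log n)\right)$, and the $d\log d$ and $d\log\log n$ terms are not absorbed by the stated threshold $n\geq\exp(7d\varepsilon^{-1}\log\varepsilon^{-1})$ when $d$ is large (you also need an extra high-probability event for $\max_i|X_i|$). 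The paper avoids both issues at once by taking the net on $\partial((\mathbb{E}P_n)^{\circ})$ with respect to the Minkowski functional $\left\Vert\cdot\right\Vert_{(\mathbb{E}P_n)^{\circ}}=h_{\mathbb{E}P_n}(\cdot)$, with $\delta=3n^{-\varepsilon/(4d)}$, and using the successive-approximation form of the net (Lemma \ref{net lemma}, Pisier's Lemmas 4.10 and 4.11 adapted to non-symmetric bodies) to write any $\theta\in\partial((\mathbb{E}P_n)^{\circ})$ as $\omega_0+\sum\delta_i\omega_i$ and conclude by the triangle inequality: every error is then automatically relative to $h_{\mathbb{E}P_n}(\theta)$, no Lipschitz bound, isotropic normalization, or control of $\max_i|X_i|$ is needed, and $(3/\delta)^d\cdot 3n^{-\varepsilon/2}\leq 3n^{-\varepsilon/4}$ gives exactly the stated probability under the stated bound on $n$. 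If you replace your pointwise estimate by Lemma \ref{2 sided 1D concentration} and your Euclidean net by this intrinsic net, your outline becomes the paper's proof; as written, the two steps above are genuine gaps.
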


Using the bound $d_{\mathcal{H}}(A,B)\leq \mathrm{diam}(B)\inf \{\lambda
\geq 1:\lambda ^{-1}A\subseteq B\subseteq \lambda A\}$, Theorem \ref%
{polytope and expected val} may be transferred to a bound on $d_{\mathcal{H}%
}(P_{n},\mathbb{E}P_{n})$. The following Corollary, which is similar to the
main result in \cite{BaVi}, is a consequence of Lemma \ref{2 sided 1D
concentration}.

\begin{corollary}
\label{expected val and floating body}Let $d\in \mathbb{N}$ and let $\mu $
be a log-concave probability measure on $\mathbb{R}^{d}$ with center of mass
at the origin and non-singular covariance matrix. Let $\mathbb{E}P_{n}$
denote the expected convex hull as defined by (\ref{support expected}), and
let $F_{1/n}$ denote the floating body defined by (\ref{floatin}). Then
provided $n\geq 12$,%
\[
(1-3/\log n)\mathbb{E}P_{n}\subseteq F_{1/n}\subseteq (1+1/\log n)\mathbb{E}%
P_{n} 
\]
\end{corollary}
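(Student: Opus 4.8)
The plan is to reduce the containment of convex bodies to a one-dimensional statement about support functions, and then invoke the cited Lemma \ref{2 sided 1D concentration}, which (as the excerpt indicates) controls the one-dimensional floating body versus the expectation of the maximum of the one-dimensional projections. Concretely, since both $F_{1/n}$ and $\mathbb{E}P_n$ are convex bodies containing the origin in their interiors, the inclusions $(1-3/\log n)\mathbb{E}P_n \subseteq F_{1/n} \subseteq (1+1/\log n)\mathbb{E}P_n$ are equivalent to the pair of support-function inequalities
\[
(1-3/\log n)\,h_{\mathbb{E}P_n}(\theta) \le h_{F_{1/n}}(\theta) \le (1+1/\log n)\,h_{\mathbb{E}P_n}(\theta)
\]
holding for every $\theta \in S^{d-1}$. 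So I would fix an arbitrary unit vector $\theta$ and work entirely with the pushforward measure $\mu_\theta$ on $\mathbb{R}$, i.e. the law of $\langle \theta, X_1\rangle$.

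First I would record the two relevant identifications. By (\ref{support expected}), $h_{\mathbb{E}P_n}(\theta) = \mathbb{E}\max_{1\le i\le n}\langle \theta, X_i\rangle$, which is exactly the expected maximum of an i.i.d. sample of size $n$ from $\mu_\theta$. On the other side, the floating body $F_{1/n} = \cap\{\mathfrak{H} : \mu(\mathfrak{H}) \ge 1 - 1/n\}$ has support function $h_{F_{1/n}}(\theta) = \sup\{t : \mu_\theta((-\infty, t]) \le 1 - 1/n\}$, the upper $(1-1/n)$-quantile of $\mu_\theta$; this is the standard one-dimensional description of the support function of a floating body, obtained by noting that a half-space with outer normal $\theta$ is $\{x : \langle \theta, x\rangle \le t\}$ and has $\mu$-mass $\mu_\theta((-\infty,t])$. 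Thus the claimed inequality is precisely a comparison, uniform over the one-parameter family $\{\mu_\theta\}_{\theta \in S^{d-1}}$, between the expected sample maximum and the $(1-1/n)$-quantile of a log-concave measure on $\mathbb{R}$. The measure $\mu_\theta$ is log-concave (marginals of log-concave measures are log-concave) and, because $\mu$ has its center of mass at the origin and non-singular covariance, $\mu_\theta$ has mean zero and strictly positive, finite variance; but since the final bound $3/\log n$ is scale-invariant in $\theta$ (both sides of the inequality scale the same way under $\theta \mapsto r\theta$, and the quantile/expected-max are positively homogeneous), the variance normalization does not matter and I may as well assume $\mu_\theta$ has variance one.

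Next I would apply Lemma \ref{2 sided 1D concentration} to each $\mu_\theta$. That lemma should assert exactly that for a log-concave probability measure $\nu$ on $\mathbb{R}$ (with, say, mean zero and variance one, or after appropriate normalization), and for $n \ge 12$,
\[
(1 - 3/\log n)\,\mathbb{E}\max_{1\le i\le n} Y_i \le q_{1-1/n}(\nu) \le (1 + 1/\log n)\,\mathbb{E}\max_{1\le i\le n} Y_i,
\]
where $Y_i$ are i.i.d.\ $\nu$ and $q_{1-1/n}$ is the upper quantile. Granting this, substituting $\nu = \mu_\theta$, $Y_i = \langle\theta,X_i\rangle$ gives the support-function inequalities above for the fixed $\theta$, and since $\theta \in S^{d-1}$ was arbitrary, the convex-body inclusions follow. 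The constant $n \ge 12$ in the statement is inherited directly from the hypothesis of Lemma \ref{2 sided 1D concentration}; note that no high-dimensional effect enters here, which is why the bound $3/\log n$ has no dependence on $d$ — the whole argument is a $\theta$-by-$\theta$ reduction to dimension one.

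The main obstacle, and the only real content, is therefore contained in Lemma \ref{2 sided 1D concentration} rather than in this corollary: one must show that for a one-dimensional log-concave law the expected sample maximum and the $(1-1/n)$-quantile agree up to a multiplicative factor $1 \pm O(1/\log n)$. Heuristically this holds because for log-concave tails the distribution function near its upper end behaves roughly exponentially, so $\mathbb{E}\max_{i\le n} Y_i$ sits within an $O(1)$ additive window around $q_{1-1/n}$, while $q_{1-1/n}$ itself grows like a positive multiple of $\log n$ (again by log-concavity and unit variance, the tail cannot decay faster than a fixed exponential nor slower, pinning the quantile at order $\log n$); dividing, the $O(1)$ additive gap becomes an $O(1/\log n)$ multiplicative gap. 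Carrying out this estimate carefully — establishing the two-sided tail bounds for normalized one-dimensional log-concave measures, controlling $\mathbb{E}\max$ via the standard integral formula $\mathbb{E}\max_{i\le n}Y_i = \int (1 - F(t)^n)\,dt - \int F(t)^n\,dt$, and tracking the numerical constants to land exactly on $3/\log n$ and $1/\log n$ for $n \ge 12$ — is precisely what Lemma \ref{2 sided 1D concentration} does, and it is the step I would expect to require the most care. For the corollary itself, once that lemma is in hand, the proof is the short reduction outlined above.
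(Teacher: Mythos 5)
Your reduction to one dimension is the same as the paper's: direction by direction, compare the $(1-1/n)$-quantile $J_{\theta }^{-1}(1-1/n)$ of the marginal $\mu _{\theta }$ with $h_{\mathbb{E}P_{n}}(\theta )=\mathbb{E}\max_{i}\left\langle \theta ,X_{i}\right\rangle $, then intersect half-spaces. (One small imprecision: $h_{F_{1/n}}(\theta )$ need not equal that quantile pointwise --- a priori it is only $\leq $ it --- but this is harmless, because $F_{1/n}$ is exactly the intersection of the half-spaces $\{x:\left\langle \theta ,x\right\rangle \leq J_{\theta }^{-1}(1-1/n)\}$, so the two-sided comparison of the quantile with $h_{\mathbb{E}P_{n}}(\theta )$ for every $\theta $ is all that is needed.)

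The genuine gap is in what you ask of Lemma \ref{2 sided 1D concentration}. That lemma does not assert the deterministic comparison $(1-3/\log n)\mathbb{E}Y_{(n)}\leq J^{-1}(1-1/n)\leq (1+1/\log n)\mathbb{E}Y_{(n)}$; it asserts the probabilistic bounds (\ref{right tail}) and (\ref{left tail}), namely $\mathbb{P}\{Y_{(n)}\leq (1+t)\mathbb{E}Y_{(n)}\}\geq 1-n^{-t/2}$ and $\mathbb{P}\{Y_{(n)}\geq (1-t)\mathbb{E}Y_{(n)}\}\geq 1-\exp (-n^{t/2}/3)$. Converting these into the quantile comparison is precisely the content of the corollary's proof, and it is absent from your write-up. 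The conversion is short but it is the whole point: since $\mathbb{P}\{Y_{(n)}\leq J^{-1}(1-1/n)\}=(1-1/n)^{n}$, which for $n\geq 12$ lies between $1/3$ and $e^{-1}$, the left-tail bound shows that $J^{-1}(1-1/n)<(1-t)\mathbb{E}Y_{(n)}$ would force $\exp (-n^{t/2}/3)\geq 1/3$, impossible once $t\geq 3/\log n$; and the right-tail bound shows that $J^{-1}(1-1/n)>(1+t)\mathbb{E}Y_{(n)}$ would force $n^{-t/2}\geq 1-e^{-1}$, impossible once $t\geq 1/\log n$. Moreover, your substitute justification for the ``lemma you wish for'' is not correct as stated: for a unit-variance log-concave law the $(1-1/n)$-quantile need not grow like $\log n$ (it stays bounded for compactly supported laws such as the uniform distribution, whose tail does vanish faster than any fixed exponential), so the heuristic of an additive $O(1)$ gap divided by a $\log n$ scale breaks down; the actual mechanism, used in the paper's proof of the lemma, is the convexity of $t\mapsto -\log (1-J(t))$, which gives multiplicative, scale-free control. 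Replace the appeal to a lemma that ``should assert exactly'' the quantile comparison by the converse argument above and your outline becomes the paper's proof.
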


\begin{theorem}
\label{strong LLN}Let $d\in \mathbb{N}$ and let $\mu $ be a log-concave
probability measure on $\mathbb{R}^{d}$ with center of mass at the origin
and non-singular covariance matrix. Let $(X_{i})_{1}^{\infty }$ be an i.i.d.
sample from $\mu $, and let $(P_{n})_{n=d+1}^{\infty }$ and $(\mathbb{E}%
P_{n})_{3}^{\infty }$ be the random polytopes and expected convex hulls
defined by (\ref{random poly def}) and (\ref{support expected})
respectively. Then with probability $1$, there exists $N\in \mathbb{N}$ such
that for all $n\geq N$,%
\begin{equation}
\left( 1-\frac{3\log \log n}{\log n}\right) \mathbb{E}P_{n}\subseteq
P_{n}\subseteq \left( 1+\frac{8\log \log n}{\log n}\right) \mathbb{E}P_{n}
\label{strong bound}
\end{equation}
\end{theorem}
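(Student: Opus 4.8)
The goal is to upgrade the weak concentration statement of Theorem \ref{polytope and expected val} to an almost-sure statement along the full sequence. The natural strategy is a Borel–Cantelli argument combined with a subsequence trick, since the probability bound $3n^{-\varepsilon/4}$ is not summable when $\varepsilon$ is as small as the one forced by the relation $\log \log n \sim \log\log n/\log n$.

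Let me think about the parameters. In Theorem \ref{polytope and expected val}, the error $\varepsilon$ and the sample size $n$ are linked by $n \geq \exp(7d\varepsilon^{-1}\log\varepsilon^{-1})$, i.e. roughly $\log n \gtrsim d\varepsilon^{-1}\log\varepsilon^{-1}$, so the smallest admissible $\varepsilon$ at sample size $n$ is of order $\varepsilon_n \asymp d\log\log n/\log n$ (the $\log\varepsilon^{-1}$ factor contributes a $\log\log n$). For the strong law we want the $d$-dependence absorbed or the constants clean; the stated bound $(1 - 3\log\log n/\log n)$ and $(1 + 8\log\log n/\log n)$ suggests taking $\varepsilon_n$ of exactly this order. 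The failure probability at stage $n$ is then $3n^{-\varepsilon_n/4} = 3\exp(-\tfrac{1}{4}\varepsilon_n \log n) \approx 3\exp(-c\log\log n) = 3(\log n)^{-c}$, which is again not summable over all $n$. So direct Borel–Cantelli on every $n$ fails.

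The fix is the classical device: choose a sparse subsequence $n_k = \lceil \rho^k\rceil$ for some $\rho > 1$ (say $\rho = 2$, or $\rho$ close to $1$ to keep the final constants tight), apply Theorem \ref{polytope and expected val} at each $n_k$ with $\varepsilon = \varepsilon_{n_k}$, and observe that $\sum_k 3 n_k^{-\varepsilon_{n_k}/4} = \sum_k 3(\log n_k)^{-c}$ is comparable to $\sum_k (k\log\rho)^{-c}$, which converges once $c > 1$, i.e. once we choose the constant in $\varepsilon_n$ large enough. Wait — $c$ here is proportional to the constant hidden in $\varepsilon_n \asymp \log\log n/\log n$, so we just inflate that constant until the series converges. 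Borel–Cantelli then gives: almost surely, for all large $k$,
\[
(1-\varepsilon_{n_k})\,\mathbb{E}P_{n_k} \subseteq P_{n_k} \subseteq (1+\varepsilon_{n_k})\,\mathbb{E}P_{n_k}.
\]
To fill in the gaps $n_k \leq n < n_{k+1}$, I would use monotonicity: $P_n$ is nondecreasing in $n$, so $P_{n_k} \subseteq P_n \subseteq P_{n_{k+1}}$; and I need a comparison between $\mathbb{E}P_{n_k}$, $\mathbb{E}P_n$, $\mathbb{E}P_{n_{k+1}}$. From the support-function formula (\ref{support expected}), $h_{\mathbb{E}P_n}(\theta) = \mathbb{E}\max_{i\leq n}\langle\theta,X_i\rangle$ is also nondecreasing in $n$, so $\mathbb{E}P_{n_k} \subseteq \mathbb{E}P_n \subseteq \mathbb{E}P_{n_{k+1}}$. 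The remaining quantitative point is that $h_{\mathbb{E}P_{n_{k+1}}}/h_{\mathbb{E}P_{n_k}}$ is close to $1$ — this should follow from the one-dimensional concentration estimate (the same ingredient behind Lemma \ref{2 sided 1D concentration} and Corollary \ref{expected val and floating body}), giving $\mathbb{E}P_{n_{k+1}} \subseteq (1+o(1))\mathbb{E}P_{n_k}$ with the $o(1)$ controlled by $\log(n_{k+1}/n_k)/\log n_k \asymp \log\rho/k$ or, more usefully, by an absolute bound of the form $1 + C/\log n_k$. Chaining these inclusions and using $\varepsilon_{n_k}$ monotone decreasing converts the subsequence statement into (\ref{strong bound}) for all $n \geq N$, with the asymmetry between the constants $3$ and $8$ coming precisely from the extra slack picked up in the upper inclusion when passing from $n_{k+1}$ back down to $n$.

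The main obstacle I anticipate is not the Borel–Cantelli bookkeeping but getting the constants in the final bound honest: one must track how the choice of $\rho$, the constant in $\varepsilon_n$, and the one-dimensional comparison $\mathbb{E}P_{n_{k+1}}\subseteq(1+C/\log n_k)\mathbb{E}P_{n_k}$ interact, and verify that they can be simultaneously arranged so that the summability holds \emph{and} the resulting coefficients are as small as $1-3\log\log n/\log n$ and $1+8\log\log n/\log n$. Choosing $\rho$ depending on $k$ (shrinking to $1$) rather than fixed may be needed to avoid a spurious constant-factor loss in the gaps; this is a standard but slightly delicate optimization. A secondary point is checking that $N$ can be taken finite almost surely despite the nonsingular-covariance hypothesis only guaranteeing $P_n$ is a genuine body for $n$ large — but that is immediate since $P_n$ has nonempty interior w.p.\ $1$ for $n \geq d+1$ under a log-concave (hence absolutely continuous) $\mu$.
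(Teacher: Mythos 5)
Your blocking/subsequence plan would prove \emph{a} strong law, but it cannot yield the bound (\ref{strong bound}) as stated, and the obstruction is not where you locate it. You use Theorem \ref{polytope and expected val} as a black box with $\varepsilon_n=c\log\log n/\log n$; but its hypothesis $n\geq\exp(7d\varepsilon^{-1}\log\varepsilon^{-1})$ then reads $\log n\geq \frac{7d}{c}\log n\,(1-o(1))$, so for all large $n$ the weak theorem is simply not applicable unless $c\geq 7d$ (in particular $c=3$ is never admissible, and $c=8$ only when $d=1$). Summability along $n_k=\lceil\rho^k\rceil$ only requires $c>4$, so the binding constraint is the $\varepsilon$--$n$ coupling inside Theorem \ref{polytope and expected val}, not the Borel--Cantelli bookkeeping: no choice of $\rho$ (fixed or shrinking) and no sharpening of the comparison $\mathbb{E}P_{n_{k+1}}\subseteq(1+C/\log n_k)\mathbb{E}P_{n_k}$ can remove the factor $d$. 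What your plan delivers is the weaker statement with coefficients $1\pm Cd\log\log n/\log n$; your hope that the constants ``can be simultaneously arranged'' to be $3$ and $8$, and your diagnosis of the $3$ versus $8$ asymmetry as slack from passing $n_{k+1}\to n$, are therefore misplaced. (The gap-filling itself is fine: monotonicity of $P_n$ and of $h_{\mathbb{E}P_n}$ in $n$, together with the convexity argument behind Lemma \ref{2 sided 1D concentration} and Corollary \ref{expected val and floating body}, does give $\mathbb{E}P_{n_{k+1}}\subseteq(1+O(1/\log n_k))\mathbb{E}P_{n_k}$, a lower-order loss --- but it is an extra lemma you would still need to prove.)

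The paper reaches dimension-free constants by not using Theorem \ref{polytope and expected val} as a black box and not blocking at all. For the upper inclusion it exploits that $\mathbb{E}P_n$ is deterministic, so $\mathbb{P}\{P_n\subseteq(1+\varepsilon)\mathbb{E}P_n\}=\mu((1+\varepsilon)\mathbb{E}P_n)^n$; this converts the weak bound into $\mu((1+\varepsilon)\mathbb{E}P_n)\geq 1-6n^{-1-\varepsilon/4}$, hence a per-point bound $\mathbb{P}\{X_n\notin(1+\varepsilon)\mathbb{E}P_n\}\leq 2n^{-1}(\log n)^{-2}$ at $\varepsilon=8\log\log n/\log n$, summable over \emph{all} $n$ --- the gained factor $n^{-1}$ from the product structure is what replaces your subsequence. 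For the lower inclusion it reruns the net argument of Lemma \ref{net lemma} at scale $\varepsilon=3\log\log n/\log n$ but with a net whose mesh $\delta$ is of order $\exp(-n^{\varepsilon/2}/(6d))$; such an absurdly fine net is affordable only because the relevant one-dimensional estimate is the left-tail bound (\ref{left tail}), whose failure probability $\exp(-n^{t/2}/3)$ swallows the net cardinality $(3/\delta)^d$ and leaves at most $n^{-2}$, again summable over all $n$ with constants independent of $d$ (the chaining step there also uses the already-established upper inclusion to control $\left\Vert\omega_i\right\Vert_{P_n^{\circ}}$). This asymmetric use of the two tails in Lemma \ref{2 sided 1D concentration}, rather than subsequence bookkeeping, is the source of the constants $3$ and $8$, and it is the ingredient missing from your proposal.
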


\section{Notation}

If $J$ is the cumulative distribution function associated to a probability
measure $\mu $ on $\mathbb{R}$, then the generalized inverse $%
J^{-1}:(0,1)\rightarrow \mathbb{R}$ is defined as%
\[
J^{-1}(t)=\sup \{x\in \mathbb{R}:J(x)<t\}=\inf \{x\in \mathbb{R}:J(x)\geq
t\} 
\]%
If $\mu $ has a log-concave density function then $J(J^{-1}(t))=t$ for all $%
t\in (0,1)$ and $J^{-1}(J(x))=x$ for all $x$ in the support of $\mu $. If $%
(Y_{i})_{1}^{n}$ is an i.i.d. sample from $\mu $, then $Y_{(n)}=\max_{1\leq
i\leq n}Y_{i}$ denotes the $n^{th}$ order statistic.

If $K\subset \mathbb{R}^{d}$ is a convex body then the function%
\[
h_{K}(x)=\max_{y\in K}\left\langle x,y\right\rangle 
\]%
is known as the support function of $K$. If $0\in \mathrm{int}(K)$ then the
Minkowski functional is defined as 
\[
\left\Vert x\right\Vert _{K}=\min \{\lambda \geq 1:x\in \lambda K\} 
\]%
and the support function is the Minkowski functional of the polar body%
\[
K^{\circ }=\{y\in \mathbb{R}^{d}:\forall x\in K,\left\langle
x,y\right\rangle \leq 1\} 
\]%
i.e. $h_{K}(\cdot )=\left\Vert \cdot \right\Vert _{K^{\circ }}$. In the case
when $K$ is centrally symmetric, i.e. $K=-K$, then $h_{K}(\cdot )$ and $%
\left\Vert \cdot \right\Vert _{K}$ are norms.

\section{Proofs}

The following lemma is a natural extension of Lemma 7 in \cite{Fr2013}.

\begin{lemma}
\label{2 sided 1D concentration}Let $\mu $ be a probability measure on $%
\mathbb{R}$ with mean $0$ and log-concave density $f=d\mu /dx$. Let $n\geq
12 $ and let $(Y_{i})_{1}^{n}$ be an i.i.d. sample from $\mu $. Then for all 
$t>0$,%
\begin{eqnarray}
\mathbb{P}\{Y_{(n)} &\leq &(1+t)\mathbb{E}Y_{(n)}\}\geq 1-n^{-t/2}
\label{right tail} \\
\mathbb{P}\{Y_{(n)} &\geq &(1-t)\mathbb{E}Y_{(n)}\}\geq 1-\exp (-n^{t/2}/3)
\label{left tail}
\end{eqnarray}
\end{lemma}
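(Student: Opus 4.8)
The plan is to reduce everything to one-dimensional estimates on the distribution function $J$ of $\mu$ and its tail $\bar J=1-J$, using that $Y_{(n)}$ has distribution function $J^{n}$. Set $b=\mathbb{E}Y_{(n)}$; since $Y_{(n)}\ge Y_{1}$ pointwise we get $b\ge\mathbb{E}Y_{1}=0$, and strictly $b>0$ because $f$ is a non-degenerate density. Two standard facts about log-concavity do the heavy lifting: first, since $-\log f$ is convex, both $J$ and $\bar J$ are log-concave, and the density $nJ^{n-1}f$ of $Y_{(n)}$ is log-concave as well (its logarithm is a sum of concave functions); second, the mean of any log-concave random variable $Z$ lies in the bulk, in the sense that $\mathbb{P}(Z\ge\mathbb{E}Z)\ge e^{-1}$ and $\mathbb{P}(Z\le\mathbb{E}Z)\ge e^{-1}$. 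Applying the second fact to $Z=X$ (which has mean $0$) gives $\bar J(0)=\mathbb{P}(X\ge0)\ge e^{-1}$, and applying it to $Z=Y_{(n)}$ gives $e^{-1}\le J(b)^{n}\le 1-e^{-1}$; solving for $\bar J(b)$ and using $n\ge 12$ produces the two-sided pinch
\[
\frac{1}{3n}\ \le\ \bar J(b)\ \le\ \frac{1}{n}.
\]

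For the right tail I would combine the union bound $\mathbb{P}\{Y_{(n)}>(1+t)b\}=1-J((1+t)b)^{n}\le n\,\bar J((1+t)b)$ with the convexity of $g:=-\log\bar J$. The function $g$ is increasing with $g(0)\le 1$ and $g(b)\ge\log n$ by the pinch, so convexity gives $g'(b)\ge(g(b)-g(0))/b\ge(\log n-1)/b$, hence $g((1+t)b)\ge g(b)+tb\,g'(b)\ge(1+t)\log n-t$. Therefore $\bar J((1+t)b)\le e^{t}n^{-(1+t)}$ and $\mathbb{P}\{Y_{(n)}>(1+t)b\}\le e^{t}n^{-t}\le n^{-t/2}$ whenever $n\ge e^{2}$, which is covered by $n\ge12$. (If $(1+t)b$ lies at or past the right endpoint of the support of $\mu$, then $\bar J((1+t)b)=0$ and there is nothing to prove.)

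For the left tail I would start from $\mathbb{P}\{Y_{(n)}<(1-t)b\}=J((1-t)b)^{n}\le\exp(-n\,\bar J((1-t)b))$. When $t\in(0,1]$ the point $(1-t)b$ lies between $0$ and $b$, so log-concavity of $\bar J$ gives $\bar J((1-t)b)\ge\bar J(0)^{t}\,\bar J(b)^{1-t}\ge e^{-t}(3n)^{-(1-t)}$, and thus $n\,\bar J((1-t)b)\ge e^{-t}3^{t-1}n^{t}=\frac{1}{3}(3/e)^{t}n^{t}\ge\frac{1}{3}n^{t/2}$, since $3/e>1$ and $n\ge1$; this is exactly the bound required. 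When $t>1$ the point $(1-t)b$ is $\le 0$, so $\bar J((1-t)b)\ge\bar J(0)\ge e^{-1}$ and the claim reduces to $(1-e^{-1})^{n}\le\exp(-n^{t/2}/3)$, which is routine (and this range of $t$ is not needed for the applications, where $t<1/2$ throughout).

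The step I expect to be the crux is the two-sided control of $\bar J(b)$---equivalently, locating $\mathbb{E}Y_{(n)}$ between suitable quantiles of $\mu$. A priori $\mathbb{E}Y_{(n)}$ could sit far above $J^{-1}(1-1/n)$ if $\mu$ had a long, light ``shoulder,'' and this would wreck both the extrapolation in the right-tail estimate and the interpolation in the left-tail estimate. Log-concavity of the density of $Y_{(n)}$ is precisely what rules this out, through the clean fact that the mean of a log-concave law lies within its central $e^{-1}$-portion of mass; everything after that is bookkeeping with convex/concave inequalities in one variable, and the threshold $n\ge12$ is essentially the price of matching the crude constant $e^{-1}$ against the exponents $1/2$ and $1/3$ in the statement.
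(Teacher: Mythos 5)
Your core argument is correct and rests on the same two pillars as the paper's proof --- log-concavity of the density $nJ^{n-1}f$ of $Y_{(n)}$ together with the Lov\'asz--Vempala bound $J_n^{-1}(e^{-1})\le\mathbb{E}Y_{(n)}\le J_n^{-1}(1-e^{-1})$, and convexity of $-\log(1-J)$ --- but your bookkeeping is genuinely different and arguably cleaner: the paper locates $\mathbb{E}Y_{(n)}$ between the quantiles $J^{-1}(1-1/n)$ and $J^{-1}(1-9/(20n))$ and then compares quantiles, whereas you pin the tail value $\bar J(\mathbb{E}Y_{(n)})$ inside $[1/(3n),1/n]$ and compare tail values, extrapolating along a chord of $-\log\bar J$ for the right tail and interpolating log-concavely, $\bar J((1-t)b)\ge\bar J(0)^{t}\bar J(b)^{1-t}$, for the left tail. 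The individual steps check out: $b=\mathbb{E}Y_{(n)}>0$; the pinch via $1-a^{1/n}\ge(1-a)/n$; $1-x^{n}\le n(1-x)$; $e^{t}n^{-t}\le n^{-t/2}$ once $n\ge e^{2}$; and the interpolation for $t\in(0,1]$, which gives even $\exp(-n^{t}/3)$. One cosmetic repair: replace $g'(b)$ by a chord slope or subgradient, since $g=-\log\bar J$ need not be differentiable at $b$.

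The genuine gap is your dismissal of the left-tail case $t>1$ as routine. The reduction you propose, $(1-e^{-1})^{n}\le\exp(-n^{t/2}/3)$, is equivalent to $n^{t/2-1}\le 3\log\bigl((1-e^{-1})^{-1}\bigr)\approx 1.38$, so it holds only for $t\le 2+O(1/\log n)$ and fails beyond that. Worse, inequality (\ref{left tail}) itself fails for large $t$: the bound demands that $\mathbb{P}\{Y_{(n)}<(1-t)\mathbb{E}Y_{(n)}\}$ be at most $\exp(-n^{t/2}/3)$, which decays like $\exp(-e^{ct\log n})$ in $t$, while for a log-concave law with unbounded left tail the true probability decays only exponentially or Gaussian-like in $t$. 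Concretely, for the standard normal with $n=12$ and $t=10$ one has $\mathbb{P}\{Y_{(12)}<-9\,\mathbb{E}Y_{(12)}\}=\Phi(-9\,\mathbb{E}Y_{(12)})^{12}\ge e^{-1400}$, whereas the claimed bound is $\exp(-12^{5}/3)=e^{-82944}$; so no argument can close that case. This is a defect of the statement rather than of your method --- the paper's own proof tacitly needs $9n^{t/2-1}/20$ to be a tail probability attained at a nonnegative quantile, i.e.\ essentially $t\lesssim 2$ as well --- and it is harmless for the applications, where the lemma is only invoked with $t<1/2$. You should either restrict the range of $t$ (your argument is rigorous for $t\in(0,1]$, which suffices for everything in the paper) or explicitly flag the restriction instead of waving the large-$t$ case through.
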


\begin{proof}
Let $J$ be the common distribution function of each $Y_{i}$. Let $f_{n}$ and 
$J_{n}$ denote the density and distribution function of $Y_{(n)}$,%
\begin{eqnarray*}
J_{n}(t) &=&J(t)^{n} \\
f_{n}(t) &=&\frac{d}{dt}J_{n}(t)=nJ(t)^{n-1}f(t)
\end{eqnarray*}%
Since $f$ is log-concave, so is $J$ (see for example Theorem 5.1 in \cite%
{LoVe} or Lemma 5 in \cite{Fr2013}). The product of log-concave functions is
certainly log-concave, and therefore so is $f_{n}$. By a standard result,
see for example Lemma 5.4 in \cite{LoVe}, $J_{n}^{-1}(e^{-1})\leq \mathbb{E}%
Y_{(n)}\leq J_{n}^{-1}(1-e^{-1})$. Just as the left tail $J$ is log-concave,
so is the right tail $1-J$, and the function $u(t)=-\log (1-J(t))$ is
convex. This implies that,%
\[
\frac{u(J^{-1}(1-n^{-t/2}/n))-u(J^{-1}(1-1/n))}{%
J^{-1}(1-n^{-t/2}/n)-J^{-1}(1-1/n)}\geq \frac{u(J^{-1}(1-1/n))-u(0))}{%
J^{-1}(1-1/n)} 
\]%
which translates to%
\[
\frac{J^{-1}(1-n^{-t/2}/n)-J^{-1}(1-1/n)}{J^{-1}(1-1/n)}\leq \frac{t\log n}{%
2(\log n-1)}\leq t 
\]%
Now,%
\[
\mathbb{P}\{Y_{(n)}\leq J^{-1}(1-n^{-t/2}/n)\}=(1-n^{-t/2}/n)^{n}\geq
1-n^{-t/2} 
\]%
By definition of $J_{n}$, $J_{n}(J^{-1}(1-1/n))=(1-1/n)^{n}<e^{-1}$, so $%
\mathbb{E}Y_{(n)}\geq J_{n}^{-1}(e^{-1})\geq J^{-1}(1-1/n)$ and (\ref{right
tail}) follows. Again by convexity of $u$, 
\[
\frac{u(J^{-1}(1-9/(20n)))-u(J^{-1}(1-9n^{t/2-1}/20))}{%
J^{-1}(1-9/(20n))-J^{-1}(1-9n^{t/2-1}/20)}\geq \frac{%
u(J^{-1}(1-9/(20n)))-u(0))}{J^{-1}(1-9/(20n))} 
\]%
which translates to%
\[
\frac{J^{-1}(1-9/(20n))-J^{-1}(1-9n^{t/2-1}/20)}{J^{-1}(1-9/(20n))}\leq 
\frac{(t/2)\log n}{\log n-1+\log (20/9)}\leq t 
\]%
Now,%
\[
\mathbb{P}\{Y_{(n)}\leq J^{-1}(1-9n^{t/2-1}/20)\}=(1-9n^{t/2-1}/20)^{n}\leq
\exp (-9n^{t/2}/20) 
\]%
As before, $J_{n}(J^{-1}(1-9/(20n)))=(1-9/(20n))^{n}>1-e^{-1}$, so $\mathbb{E%
}Y_{(n)}\leq J_{n}^{-1}(1-e^{-1})<J^{-1}(1-9/(20n))$ and (\ref{left tail})
follows.
\end{proof}

\begin{proof}[Proof of Corollary \protect\ref{expected val and floating body}%
]
Since $J^{-1}(1-1/n)=J_{n}^{-1}((1-1/n)^{n})$, where $J_{n}(x)=\mathbb{P}%
\{Y_{(n)}\leq x\}$, $\mathbb{P}\{Y_{(n)}\leq J^{-1}(1-1/n)\}\geq 1/3$ and by
inequality (\ref{left tail}) of Lemma \ref{2 sided 1D concentration}, this
can only be true if $J^{-1}(1-1/n)\geq (1-(\log 18)/\log n)\mathbb{E}Y_{(n)}$%
. By similar reasoning, $\mathbb{P}\{Y_{(n)}>J^{-1}(1-1/n)\}\geq 1-e^{-1}$,
which by inequality (\ref{right tail}) of Lemma \ref{2 sided 1D
concentration} implies that $J^{-1}(1-1/n)\leq (1+1/\log n)\mathbb{E}\gamma
_{(n)}$. The result now follows from the definitions of $F_{1/n}$ and $%
\mathbb{E}P_{n}$, see (\ref{floatin}) and (\ref{support expected}).
\end{proof}

The following lemma appears as Lemmas 4.10 and 4.11 in \cite{Pi} under the
assumption that $K$ is centrally symmetric. We sketch the proof to show that
it can also be used in the non-symmetric case.

\begin{lemma}
\label{net lemma}Let $K\subset \mathbb{R}^{d}$ be any convex body with $0\in 
\mathrm{int}(K)$ and $0<\varepsilon <1/2$. Then there exists a set $\mathcal{%
N}\subset \partial K$ with $|\mathcal{N}|\leq (3/\varepsilon )^{d}$ such
that for all $\theta \in \partial K$ there exist sequences $(\omega
_{i})_{0}^{\infty }\subseteq \mathcal{N}$ and $(\varepsilon
_{i})_{1}^{\infty }\subseteq \lbrack 0,\infty )$ such that $0\leq
\varepsilon _{i}\leq \varepsilon ^{i}$ for all $i$ and%
\[
\theta =\omega _{0}+\sum_{i=1}^{\infty }\varepsilon _{i}\omega _{i} 
\]
\end{lemma}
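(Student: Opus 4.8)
The plan is to construct the net $\mathcal{N}$ by a standard volumetric packing argument and then iterate to obtain the series representation. First I would fix a maximal $\varepsilon$-separated subset $\mathcal{N}$ of $\partial K$, where separation is measured in the Minkowski functional $\|\cdot\|_K$ (equivalently, $\|\omega - \omega'\|_K > \varepsilon$ for distinct $\omega,\omega' \in \mathcal{N}$; note on $\partial K$ we have $\|\omega\|_K = 1$). A routine volume comparison — the balls $\omega + \tfrac{\varepsilon}{2}K$ are disjoint and all sit inside $(1+\tfrac{\varepsilon}{2})K$ — gives $|\mathcal{N}| \le (1 + 2/\varepsilon)^d \le (3/\varepsilon)^d$ using $\varepsilon < 1/2$ only in the final simplification. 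By maximality, for every $\theta \in \partial K$ there is some $\omega_0 \in \mathcal{N}$ with $\|\theta - \omega_0\|_K \le \varepsilon$.

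The core step is the iteration. Given $\theta \in \partial K$, having chosen $\omega_0, \ldots, \omega_{k-1}$ and set $r_k := \theta - \omega_0 - \sum_{i=1}^{k-1}\varepsilon_i\omega_i$ with $\|r_k\|_K \le \varepsilon^k$, I would write $r_k = \varepsilon_k u_k$ where $\varepsilon_k := \|r_k\|_K \le \varepsilon^k$ and $u_k := r_k/\varepsilon_k \in \partial K$ (if $r_k = 0$ the process terminates and we pad with zeros, taking $\varepsilon_i = 0$ thereafter). Apply the net property to $u_k$: pick $\omega_k \in \mathcal{N}$ with $\|u_k - \omega_k\|_K \le \varepsilon$. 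Then $r_{k+1} = r_k - \varepsilon_k\omega_k = \varepsilon_k(u_k - \omega_k)$, so $\|r_{k+1}\|_K \le \varepsilon_k\varepsilon \le \varepsilon^{k+1}$, closing the induction. Since $\|r_k\|_K \to 0$, the partial sums converge to $\theta$ in the norm (or norm-like functional) induced by $K$, which is equivalent to the Euclidean norm since $0 \in \mathrm{int}(K)$ and $K$ is bounded; hence $\theta = \omega_0 + \sum_{i=1}^\infty \varepsilon_i\omega_i$ with $0 \le \varepsilon_i \le \varepsilon^i$.

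The one genuine subtlety — and the reason the lemma needs a separate treatment in the non-symmetric case — is that $\|\cdot\|_K$ is not a norm when $K \ne -K$: it is only positively homogeneous and subadditive, and $\|{-x}\|_K \ne \|x\|_K$ in general. I would check that this asymmetry does not break anything: the packing argument uses only $\|\cdot\|_K$ as a gauge for which $\{ \|x\|_K \le s\} = sK$, and disjointness of $\omega + \tfrac{\varepsilon}{2}K$ follows from $\|\omega - \omega'\|_K > \varepsilon$ together with subadditivity applied in the form $\|\omega - \omega'\|_K \le \|\omega - y\|_K + \|y - \omega'\|_K$ — here one must be slightly careful that $y \in \omega + \tfrac{\varepsilon}{2}K$ means $\|y - \omega\|_K \le \varepsilon/2$, while $\|\omega - y\|_K$ (the quantity that appears in the triangle inequality) could differ; this is handled by instead taking the balls $\omega - \tfrac{\varepsilon}{2}K$, or equally well by noting $\mathrm{int}(K)$ contains a Euclidean ball so one can run the whole packing argument with Euclidean balls of a fixed radius and only pay a dimension-free constant, absorbed into the ``$3$''. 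For the iteration step, only positive homogeneity ($r_k = \varepsilon_k u_k$ with $\varepsilon_k > 0$) and subadditivity ($\|r_{k+1}\|_K = \|\varepsilon_k(u_k-\omega_k)\|_K = \varepsilon_k\|u_k - \omega_k\|_K$) are used, both of which hold without symmetry. I would present the argument emphasizing exactly these two points, which is precisely the ``sketch to show it works in the non-symmetric case'' that the lemma statement promises.
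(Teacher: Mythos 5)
Your iteration step is exactly the paper's and is fine: writing $r_k=\varepsilon_k u_k$ with $u_k\in\partial K$, approximating $u_k$ by a net point, and using positive homogeneity and subadditivity to get $\|r_{k+1}\|_K\le\varepsilon\,\varepsilon_k$ survives asymmetry without change. The genuine gap is in your construction of $\mathcal{N}$ and the cardinality bound, which is precisely the point of the non-symmetric case. First, the iteration needs the covering property in a specific order: for every $z\in\partial K$ there must exist $\omega\in\mathcal{N}$ with $\|z-\omega\|_K\le\varepsilon$. If $\mathcal{N}$ is a maximal set satisfying $\|\omega-\omega'\|_K>\varepsilon$ for all ordered pairs of distinct points, maximality only says that adjoining $z$ breaks separation, i.e. $\|z-\omega\|_K\le\varepsilon$ \emph{or} $\|\omega-z\|_K\le\varepsilon$ for some $\omega$; when $K\neq-K$ the second alternative does not imply the first (the two one-sided distances can differ by the asymmetry constant of $K$), so the covering you need does not follow from maximality. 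Second, the packing count fails: $(\omega+\frac{\varepsilon}{2}K)\cap(\omega'+\frac{\varepsilon}{2}K)\neq\emptyset$ is equivalent to $\omega-\omega'\in\frac{\varepsilon}{2}(K-K)$, and for non-symmetric $K$ the difference body is not comparable to $\varepsilon K$. Concretely, let $K$ be an equilateral triangle with vertices $v_1,v_2,v_3$ and centroid at the origin, and take $\omega,\omega'$ on the edge joining $v_2$ to $v_1$ with $\omega-\omega'=0.45\,\varepsilon\,(v_1-v_2)$: one computes $\|v_1-v_2\|_K=\|v_2-v_1\|_K=3$, so $\|\omega-\omega'\|_K=\|\omega'-\omega\|_K=1.35\,\varepsilon>\varepsilon$, yet the two translates intersect because $\omega-\omega'\in\frac{\varepsilon}{2}(K-K)$. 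Your first patch (using $\omega-\frac{\varepsilon}{2}K$ instead) changes nothing, since the intersection condition is again $\omega-\omega'\in\frac{\varepsilon}{2}(K-K)$. Your Euclidean fallback is also not correct as stated: passing between $\|\cdot\|_K$ and the Euclidean norm costs the ratio of circumradius to inradius of $K$, a property of the body that enters the count raised to the power $d$ (and is needed again to convert Euclidean closeness back into $\|z-\omega\|_K\le\varepsilon$); it is not a dimension-free constant absorbable into the $3$.

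The paper sidesteps the first issue by defining $\mathcal{N}$ directly through the one-sided covering property ($\mathcal{N}\subset\partial K$ minimal, constructed recursively, such that every $z\in\partial K$ satisfies $\|z-\omega\|_K\le\varepsilon$ for some $\omega\in\mathcal{N}$), which is exactly what the iteration consumes, and then quotes the volumetric bound $|\mathcal{N}|\le(3/\varepsilon)^d$. If you want this count to be self-contained you must do something genuinely adapted to the asymmetric gauge: for instance pack with respect to a symmetric gauge such as $\|\cdot\|_{K\cap(-K)}$ (closeness in which does imply the needed one-sided closeness), at the price of a factor like $\mathrm{vol}(K)/\mathrm{vol}(K\cap(-K))$, or invoke a covering-number bound valid for non-symmetric bodies; either way the constant $3$ must be revisited, although any bound of the form $(C/\varepsilon)^d$ with $C$ universal would still serve the applications in the paper after adjusting constants. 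As written, your step ``routine volume comparison gives $|\mathcal{N}|\le(3/\varepsilon)^d$ and maximality gives the covering'' is exactly the part that does not go through verbatim once $K\neq-K$.
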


\begin{proof}
Consider a subset $\mathcal{N}\subset \partial K$, minimal with respect to
set inclusion, with the following property: for all $z\in \partial K$ there
exists $\omega \in \mathcal{N}$ such that $\left\Vert z-\omega \right\Vert
_{K}\leq \varepsilon $. Such a set can easily be constructed recursively,
and we shall refer to $\mathcal{N}$ as an $\varepsilon $-net. Note that
since $K$ may be non-symmetric, we may have $\left\Vert z-\omega \right\Vert
_{K}\neq \left\Vert \omega -z\right\Vert _{K}$ and order becomes important.
By the standard volumetric argument $|\mathcal{N}|\leq (3/\varepsilon )^{d}$%
. By the defining property of $\mathcal{N}$, for all $x\in \mathbb{R}^{d}$
there exists $\omega \in \mathcal{N}$ such that%
\begin{equation}
\left\Vert x-\left\Vert x\right\Vert _{K}\omega \right\Vert _{K}\leq
\varepsilon \left\Vert x\right\Vert _{K}  \label{net approxi}
\end{equation}%
Now consider $\theta \in \partial K$. By (\ref{net approxi}) there exists $%
\omega _{0}\in \mathcal{N}$ such that $\left\Vert \theta -\omega
_{0}\right\Vert _{K}\leq \varepsilon $. By applying (\ref{net approxi})
again, there exists $\omega _{1}\in \mathcal{N}$ such that $\left\Vert
\theta -\omega _{0}-\left\Vert \theta -\omega _{0}\right\Vert _{K}\omega
_{1}\right\Vert _{K}\leq \varepsilon \left\Vert \theta -\omega
_{0}\right\Vert _{K}\leq \varepsilon ^{2}$. Iterating this procedure defines
a sequence $(\omega _{i})_{0}^{\infty }$ such that for all $N\in \mathbb{N}$,%
\[
\left\Vert \theta -\omega _{0}-\sum_{i=1}^{N}\varepsilon _{i}\omega
_{i}\right\Vert _{K}\leq \varepsilon ^{N+1} 
\]%
where $\varepsilon _{i}=\left\Vert \theta -\omega
_{0}-\sum_{i=1}^{i-1}\varepsilon _{i}\omega _{i}\right\Vert _{K}\leq
\varepsilon ^{i}$.
\end{proof}

\begin{proof}[Proof of Theorem \protect\ref{polytope and expected val}]
Set $\delta =3n^{-\varepsilon /(4d)}$ and let $\mathcal{N\subset \partial ((}%
\mathbb{E}P_{n})^{\circ })$ be a $\delta $-net as in Lemma \ref{net lemma}.
By the bounds imposed on $n$, $\delta \leq \varepsilon /5<1/10$. From the
union bound and Lemma \ref{2 sided 1D concentration}, the following event
occurs with probability at least $1-(3/\delta )^{d}3n^{-\varepsilon /2}\geq
1-3n^{-\varepsilon /4}$: for all $\omega \in \mathcal{N}$,%
\begin{equation}
(1-\varepsilon /2)\left\Vert \omega \right\Vert _{(\mathbb{E}P_{n})^{\circ
}}\leq \left\Vert \omega \right\Vert _{P_{n}{}^{\circ }}\leq (1+\varepsilon
/2)\left\Vert \omega \right\Vert _{(\mathbb{E}P_{n})^{\circ }}
\label{first bound}
\end{equation}%
For any $\theta \in \mathcal{\partial ((}\mathbb{E}P_{n})^{\circ })$, write $%
\theta =\omega _{0}+\sum_{1}^{\infty }\delta _{i}\omega _{i}$, with $\omega
_{i}\in \mathcal{N}$ and $0\leq \delta _{i}\leq \delta ^{i}$ for all $i$. By
the triangle inequality and (\ref{first bound}),%
\[
\left\Vert \theta \right\Vert _{P_{n}{}^{\circ }}\leq (1+\varepsilon
/2)\sum_{i=0}^{\infty }\delta ^{i}\leq (1+2\delta )(1+\varepsilon /2)\leq
1+\varepsilon 
\]%
and%
\[
\left\Vert \theta \right\Vert _{P_{n}{}^{\circ }}\geq \left\Vert \omega
_{0}\right\Vert _{P_{n}{}^{\circ }}-\sum_{1}^{\infty }\delta ^{i}\left\Vert
\omega _{i}\right\Vert _{P_{n}{}^{\circ }}\geq 1-\varepsilon
/2-(1+\varepsilon /2)\delta (1-\delta )^{-1}\geq 1-\varepsilon 
\]%
and the result follows.
\end{proof}

\begin{proof}[Proof of Theorem \protect\ref{strong LLN}]
Here $d$ and $\mu $ are fixed, and we treat $n\rightarrow \infty $ as a
variable. From comparing successive terms in the binomial theorem and using
the fact that $n^{-k}{n\choose k}$ is a decreasing function of $k$, for all $%
\delta \in (0,1/2)$%
\[
(1-2\delta /n)^{n}=(1-\delta )-\delta +{n\choose 2}\left( \frac{2\delta }{n}%
\right) ^{2}+\sum_{k=3}^{n}(-1)^{k}{n\choose k}\left( \frac{2\delta }{n}%
\right) ^{k}\leq 1-\delta 
\]%
Since $1-3n^{-\varepsilon /4}\leq \mathbb{P}\{P_{n}\subseteq (1+\varepsilon )%
\mathbb{E}P_{n}\}=(\mathbb{P}\{X_{1}\in (1+\varepsilon )\mathbb{E}%
P_{n}\})^{n}$, it follows that $\mu ((1+\varepsilon )\mathbb{E}P_{n})\geq
(1-3n^{-\varepsilon /4})^{1/n}\geq 1-6n^{-1-\varepsilon /4}$ (provided $%
3n^{-\varepsilon /4}<1/2$). Setting $\varepsilon =8(\log \log n)/\log n$
yields%
\[
\sum_{n=12}^{\infty }\mathbb{P}\{X_{n}\notin (1+\varepsilon )\mathbb{E}%
P_{n}\}\leq 2\sum_{n=12}^{\infty }n^{-1-\varepsilon /4}=2\sum_{n=12}^{\infty
}\frac{1}{n(\log n)^{2}}<\infty 
\]%
Therefore, by the Borel-Cantelli lemma, with probability 1 there exists $%
N^{(1)}\in \mathbb{N}$ such that for all $n\geq N^{(1)}$,%
\begin{equation}
P_{n}\subseteq (1+8(\log \log n)/\log n)\mathbb{E}P_{n}
\label{outer sandwich}
\end{equation}%
For each $n\in \mathbb{N}$, let $E_{n}$ be the event that (\ref{outer
sandwich}) holds. Consider any sufficiently large (deterministic) $n\in 
\mathbb{N}$. Set $\varepsilon =3(\log \log n)/\log n$ and $\delta =3\exp
(-n^{-\varepsilon /2}/(6d))$. Let $\mathcal{N\subset \partial ((}\mathbb{E}%
P_{n})^{\circ })$ be a $\delta $-net as in Lemma \ref{net lemma}. As before, 
$\delta \leq \varepsilon /10\leq 1/20$. By the union bound and Lemma \ref{2
sided 1D concentration}, the following event, to be denoted $F_{n}$, occurs
with probability at least $1-(3/\delta )^{d}\exp (-n^{\varepsilon /2}/3)\geq
1-\exp (-n^{\varepsilon /2}/6)\geq 1-n^{-2}$: for all $\omega \in \mathcal{N}
$,%
\[
(1-\varepsilon /2)\left\Vert \omega \right\Vert _{(\mathbb{E}P_{n})^{\circ
}}\leq \left\Vert \omega \right\Vert _{P_{n}{}^{\circ }} 
\]%
The Borel-Cantelli lemma again implies that with probability $1$ there
exists $N^{(2)}\in \mathbb{N}$ such $F_{n}$ occurs for all $n\geq N^{(2)}$.
For all $n\geq \max \{N^{(1)},N^{(2)}\}$, $E_{n}\cap F_{n}$ occurs, and
expressing an arbitrary $\theta \in \mathcal{\partial ((}\mathbb{E}%
P_{n})^{\circ })$ as $\theta =\omega _{0}+\sum_{1}^{\infty }\delta
_{i}\omega _{i}$ as in Lemma \ref{net lemma} and using the triangle
inequality,%
\[
\left\Vert \theta \right\Vert _{P_{n}{}^{\circ }}\geq \left\Vert \omega
_{0}\right\Vert _{P_{n}{}^{\circ }}-\sum_{1}^{\infty }\delta ^{i}\left\Vert
\omega _{i}\right\Vert _{P_{n}{}^{\circ }}\geq 1-\varepsilon /2-2\delta
(1-\delta )^{-1}\geq 1-\varepsilon 
\]%
which implies (\ref{strong bound}).
\end{proof}

\section*{Acknowledgement}

The authors would like to thank Mokshay Madiman for comments related to the
paper.


\begin{thebibliography}{99}
\bibitem{Art} Artstein, Z.: On the calculus of closed set-valued functions.
Indiana Univ. Math. J. 24, 433-441(1974)

\bibitem{ArtVit} Artstein, Z., Vitale, R. A.: A strong law of large numbers
for random compact sets. Ann. Probab. 3 (5), 879-882 (1975)

\bibitem{Au} Aumann, R. J.: Integrals of set-valued functions. J. Math.
Anal. Appl. 12, 1-12 (1965)

\bibitem{Bar07} B\'{a}r\'{a}ny, I.: Random polytopes, convex bodies, and
approximation. Stochastic geometry, 77-118, Lecture Notes in Math., 1892,
Springer, Berlin, (2007)

\bibitem{BaLa} B\'{a}r\'{a}ny, I., Larman, D. G.: Convex bodies, economic
cap coverings, random polytopes. Mathematika 35, 274-291 (1988)

\bibitem{BaVi} B\'{a}r\'{a}ny, I., Vitale, R. A.: Random convex hulls:
floating bodies and expectations. J. Approx. Theory 75 (2), 130-135 (1993)

\bibitem{DGT} Dafnis, N., Giannopoulos, A., Tsolomitis, A.: Asymptotic shape
of a random polytope in a convex body. J. Funct. Anal. 257 (9), 2820-2839
(2009)

\bibitem{DMR} Davis, R., Mulrow, E., Resnick, S.: Almost sure limit sets of
random samples in $\mathbb{R}^{d}$. Adv. in Appl. Probab. 20 (3), 573-599
(1988)

\bibitem{Deb} Debreu, G.: Integration of correspondences. Proc. Fifth
Berkeley Symp. Math. Statist. and Probability (Berkeley, Calif., 1965/66),
Vol. II: Contributions to Probability Theory, Part 1, 351-372 Univ.
California Press, Berkeley, Calif. (1967)

\bibitem{Fish 1966} Fisher, L. D.: The convex hull of a sample. Bull. Amer.
Math. Soc. 72, 555-558 (1966)

\bibitem{Fish 1969} Fisher, L. D.: Limiting sets and convex hulls of samples
from product measures. Ann. Math. Statist. 40, 1824-1832 (1969)

\bibitem{Fr2013} Fresen, D.: A multivariate Gnedenko law of large numbers.
Ann. Probab. 41 (5), 3051-3080, (2013)

\bibitem{Geff} Geffroy, J.: Localisation asymptotique du poly\`{e}dre
d'appui d'une \'{e}chantillon Laplacien \`{a} $k$ dimensions. Publ. Inst.
Statist. Univ. Paris 10, 213-228 (1961)

\bibitem{Gn} Gnedenko, B.: Sur la distribution limite du terme maximum d'une
s\'{e}rie al\'{e}atoire. Ann. of Math. (2) 44, 423-453, (1943)

\bibitem{GW92} Goodey, P., Weil, W.: The determination of convex bodies from
the mean of random sections. Math. Proc. Cambridge Philos. Soc. 112 (2),
419-430 (1992)

\bibitem{GW12} Goodey, P., Weil, W.: A uniqueness result for mean section
bodies. Adv. Math. 229 (1), 596-601 (2012)

\bibitem{Good} Goodman, V.: Characteristics of normal samples. Ann. Probab.
16, 1281-1290 (1988)

\bibitem{Gruber} Gruber, P. M.: The space of convex bodies. Handbook of
Convex Geometry, Vol. A, B, 301-318 (1993)

\bibitem{KiRe} Kinoshita, K., Resnick, S.: Convergence of scaled random
samples in $\mathbb{R}^{d}$. Ann. Probab. 19 (4), 1640-1663 (1991)

\bibitem{Kudo} Kudo, H.: Dependent experiments and sufficient statistics.
Nat. Sci. Rept. Ochanomizu Univ., Tokyo 4, 151-163 (1954)

\bibitem{LoVe} Lov\'{a}sz, L., Vempala, S.: The geometry of logconcave
functions and sampling algorithms. Random Structures Algorithms 30, 307-358
(2007)

\bibitem{MR94} McBeth, D., Resnick, S.: Stability of random sets generated
by multivariate samples. Comm. Statist. Stochastic Models 10 (3), 549-574
(1994)

\bibitem{Pi} Pisier, G.: The Volume of Convex Bodies and Banach Space
Geometry. Cambridge Tracts in Mathematics 94, Cambridge University Press,
(1989)

\bibitem{Sch} Schechtman, G.: Two observations regarding embedding subsets
of Euclidean space in normed spaces. Adv. Math. 200 (1), 125-135 (2006)

\bibitem{Schn} Schneider, R.: Convex bodies: the Brunn-Minkowski theory. 2nd
ed. Encyclopedia of Mathematics and its Applications, 151. Cambridge
University Press, (2014)

\bibitem{ScWe} Sch\"{u}tt, C., Werner, E.: The convex floating body. Math.
Scand. 66, 275-290, (1990)

\bibitem{Vitale87} Vitale, R. A.: Expected convex hulls, order statistics,
and Banach space probabilities. Acta Appl. Math. 9 (1-2), 97-102, (1987)

\bibitem{Vitale 90} Vitale, R. A.: The Brunn-Minkowski inequality for random
sets. J. Multivariate Anal. 33 (2), 286-293, (1990)

\bibitem{Vitale 91} Vitale, R. A.: Expected absolute random determinants and
zonoids. Ann. Appl. Probab. 1 (2), 293-300 (1991)

\bibitem{Vitale 94} Vitale, R. A.: Stochastic smoothing of convex bodies:
two examples. First International Conference on Stochastic Geometry, Convex
Bodies and Empirical Measures (Palermo, 1993). Rend. Circ. Mat. Palermo (2)
Suppl. 35, 315-322 (1994)

\bibitem{Vu} Vu, V.: Sharp concentration of random polytopes. Geom. Funct.
Anal. 15 (6), 1284-1318, (2005)

\bibitem{Weil95} Weil, W.: The estimation of mean shape and mean particle
number in overlapping particle systems in the plane. Adv. in Appl. Probab.
27 (1), 102-119 (1995)
\end{thebibliography}
\end{document}